\newcommand{\bburl}[1]{\textcolor{blue}{\url{#1}}}
\newcommand{\dfn}[1]{\textcolor{blue}{\textit{#1}}}
\newtheorem{thm}{Theorem}[section]
\newtheorem{lem}[thm]{Lemma}
\theoremstyle{definition}
\newtheorem{defi}[thm]{Definition}
\theoremstyle{remark}
\newcommand\ben{\begin{enumerate}}
\newcommand\een{\end{enumerate}}
\newcommand{\pre}{\mathrm{pre}}
\newcommand{\nc}{\newcommand}
\nc{\N}{\mathbb N}
\nc{\DD}{\mathbb D}
\nc{\TT}{\mathbb T}
\nc{\EE}{\mathbb E}
\nc{\cT}{\mathcal T}
\nc{\cP}{\mathcal P}
\nc{\cM}{\mathcal M}
\nc{\cC}{\mathcal C}
\nc{\cB}{\mathcal B}
\nc{\cG}{\mathcal G}
\nc{\cD}{\mathcal D}
\nc{\cA}{\mathcal A}
\nc{\cS}{\mathcal S}
\nc{\cF}{\mathcal F}
\nc{\cL}{\mathcal L}
\nc{\cR}{\mathcal R}
\nc{\be}{\mathbf{e}}
\nc{\bv}{\mathbf{v}}
\nc{\ba}{\mathbf{a}}
\nc{\vc}{\mathbf{c}}
\numberwithin{equation}{section}
\numberwithin{part}{section}
\title
{
    A note on multiset reconstruction from sum and pairwise products
}
\author[Li]{Stella Jiahui Li}
\email{\textcolor{blue}{\href{mailto:stellali@college.harvard.edu}{stellali@college.harvard.edu}}}
\address{Department of Mathematics, Harvard University}
\begin{document}

\begin{abstract}
    Ballantine, Beck, and Merca defined a map $\pre_2$, which sends an integer partition $\lambda = (\lambda_1, \dots, \lambda_{\ell})$ to the set $\{\lambda_i\lambda_j : 1 \leq i < j \leq \ell\}$
    consisting of the pairwise products of parts of $\lambda$. The same three authors and Sagan conjectured that for each $n$, the map $\pre_2$ is injective on the set of integer partitions of $n$. In this note, we prove their conjecture.
\end{abstract}

\maketitle

\section{Background}

An \dfn{integer partition} $\lambda = (\lambda_1, \dots, \lambda_\ell)$ of a positive integer $n$ is a weakly decreasing sequence of positive integers  whose sum is $n$. We refer to the $\lambda_i$'s as the \dfn{parts} of a partition, $|\lambda| = n$ as the \dfn{size}, and $\ell(\lambda) = \ell$ as the \dfn{length}. 

The $j$th \dfn{elementary symmetric polynomial} $e_j$ is defined by 
    $$e_j(x_1, \dots, x_n) = \begin{cases}
        \sum_{1 \leq i_1 < i_2 < \cdots < i_j \leq n} x_{i_1} \cdots x_{i_j} &\text{if $j \leq n$}; \\
        0 &\text{if }j > n.
    \end{cases}$$

We are interested in the partition whose parts are the summands of $e_j(\lambda_1, \dots, \lambda_{\ell})$. 

\begin{defi}[\cite{BBM24}]
    Given a partition $\lambda = (\lambda_1, \dots, \lambda_{\ell})$, we define $\pre_k(\lambda)$ to be the partition whose multiset of parts is $\{\lambda_{i_1}\cdots \lambda_{i_k} \ : 1 \leq i_1 < i_2 < \cdots < i_k \leq \ell \}$. If $\ell(\lambda)  < j$, then $\pre_k(\lambda)$ is the empty partition $\emptyset$. We call $\pre_k(\lambda)$ an \dfn{elementary symmetric partition}.
\end{defi}

For example, consider the partition $\lambda = (4, 2, 1, 1)$ and $k = 2$. We can evaluate $e_2(4,2,1,1)$ as $$e_2(4,2,1,1) = 4\cdot 2 + 4 \cdot 1+ 4\cdot 1 + 2 \cdot 1 + 2 \cdot 1+1 \cdot 1,$$ and by looking at the summands, we get that $\pre_2(\lambda) = (8,4,4,2,2,1)$.

The main conjecture from \cite{BBM24} and \cite{BBMS24+} is that the map $\pre_2$ is injective on the set of partitions of $n$ for each $n$. Our main result is a proof of this conjecture. 

\begin{thm}\label{thm: main theorem} If $\lambda$ and $\mu$ are partitions of a positive integer $n$, then $\pre_2(\lambda) = \pre_2(\mu)$ if and only if $\lambda = \mu$. 
\end{thm}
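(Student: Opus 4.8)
The plan is to translate the hypothesis from multisets to power sums, exploit a quadratic identity to propagate equality of power sums along the powers of $2$, and then finish with an elementary ``extract the largest part'' argument. Throughout, for a finite multiset $A$ of positive integers write $p_m(A) = \sum_{a \in A} a^m$; since a partition is determined by its multiset of parts, and a finite multiset of positive integers is determined by its power sums, the condition $\pre_2(\lambda) = \pre_2(\mu)$ is equivalent to $p_m(\pre_2(\lambda)) = p_m(\pre_2(\mu))$ for every $m \ge 1$. The key computation, coming from $\bigl(\sum_i \lambda_i^m\bigr)^2 = \sum_i \lambda_i^{2m} + 2\sum_{i<j}\lambda_i^m\lambda_j^m$, is
\[
p_m(\pre_2(\lambda)) \;=\; \sum_{1 \le i < j \le \ell} (\lambda_i\lambda_j)^m \;=\; \tfrac12\bigl(p_m(\lambda)^2 - p_{2m}(\lambda)\bigr).
\]
Hence $\pre_2(\lambda) = \pre_2(\mu)$ is equivalent to $p_m(\lambda)^2 - p_{2m}(\lambda) = p_m(\mu)^2 - p_{2m}(\mu)$ for all $m \ge 1$. (The length-one case, where $\pre_2(\lambda) = \emptyset$, is immediate because $(n)$ is the only partition of $n$ of length $1$; it can also be read off the identity, and the same identity shows no longer partition of $n$ has $\pre_2 = \emptyset$.)

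Next I would bootstrap along powers of $2$. Since $\lambda$ and $\mu$ are partitions of the same $n$, we have $p_1(\lambda) = p_1(\mu) = n$. Rewriting the relation at exponent $m$ as
\[
p_{2m}(\lambda) - p_{2m}(\mu) \;=\; p_m(\lambda)^2 - p_m(\mu)^2 \;=\; \bigl(p_m(\lambda) - p_m(\mu)\bigr)\bigl(p_m(\lambda) + p_m(\mu)\bigr),
\]
we see that $p_m(\lambda) = p_m(\mu)$ forces $p_{2m}(\lambda) = p_{2m}(\mu)$. Starting from $m = 1$ and iterating yields $p_{2^k}(\lambda) = p_{2^k}(\mu)$ for all $k \ge 0$.

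Finally I would invoke the elementary fact that a finite multiset $A$ of positive integers is completely determined by the sequence $\bigl(p_{2^k}(A)\bigr)_{k \ge 1}$: if $a = \max A$ occurs with multiplicity $c$, and $a'$ is the next largest element present (or the sum is just $c\,a^{2^k}$ if there is none), then $p_{2^k}(A) = c\,a^{2^k} + O\bigl(a'^{\,2^k}\bigr)$ as $k \to \infty$, which recovers $a$ and $c$; deleting all copies of $a$ and recursing on the remaining multiset identifies all of $A$. Applying this to the multisets of parts of $\lambda$ and $\mu$, which have equal $p_{2^k}$ for all $k \ge 1$ by the previous step, shows $\lambda$ and $\mu$ have the same parts, i.e. $\lambda = \mu$; the converse is trivial. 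The only step requiring real thought is the power-sum identity for $\pre_2$; after that the argument is bookkeeping plus the one-line limiting lemma, so I do not expect a serious obstacle, though I would be careful about the short-length edge cases and about justifying that a positive-integer multiset is pinned down by its power sums.
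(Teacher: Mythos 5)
Your proposal is correct, and its engine is the same as the paper's: the identity $\sum_{i<j}(\lambda_i\lambda_j)^m = \tfrac12\bigl(p_m(\lambda)^2 - p_{2m}(\lambda)\bigr)$ together with $p_1(\lambda)=p_1(\mu)=n$ is exactly how the paper bootstraps $p_{2^k}(\lambda)=p_{2^k}(\mu)$ for all $k$. Where you genuinely diverge is the endgame. The paper uses the power sums only to recover the single value $\lambda_1=\mu_1$ (via $\lambda_1=\lim_k p_{2^k}(\lambda)^{1/2^k}$) and then returns to the multiset $\pre_2$ itself: its Lemma 2.1 peels off the parts one at a time by repeatedly taking the maximum of $\pre_2(\lambda)$ with the already-identified products $\lambda_i\lambda_j$ removed, dividing by $\lambda_1$. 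You instead never go back to $\pre_2$: you show that a finite multiset of positive numbers is completely determined by its power sums at exponents $2^k$, by extracting the largest element as a limit, its multiplicity as $\lim_k p_{2^k}/a^{2^k}$, subtracting $c\,a^{2^k}$, and recursing (the recursion also forces equal lengths, so no separate length argument is needed). Both finishes are sound and equally valid for nonincreasing sequences of positive reals summing to $n$, which is the strengthening the paper notes; your version is slightly more self-contained in that it needs no analogue of the paper's Lemma 2.1, at the cost of a small asymptotic bookkeeping argument for multiplicities, while the paper's greedy max-extraction is more combinatorial and avoids any limit beyond the one recovering $\lambda_1$. Your edge-case remarks (length one, and that only equal multisets, hence equal power sums of $\pre_2$, is actually needed rather than the stated equivalence) are handled adequately.
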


In fact, our proof establishes a stronger statement where we replace partitions of $n$ by finite nonincreasing sequences of positive real numbers summing to $n$.

\section{Proof}

We start with an auxiliary lemma. 

\begin{lem}\label{lem: largest term}
    Let $\lambda$ and $\mu$ be integer partitions of a positive integer $n$. If $\lambda_1 = \mu_1$ and $\pre_2(\lambda) = \pre_2(\mu)$, then $\lambda = \mu$.
\end{lem}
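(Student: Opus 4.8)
The plan is to recover the parts of $\lambda$ one at a time, from largest to smallest, by repeatedly deleting from $\pre_2(\lambda) = \pre_2(\mu)$ the products that involve only already-identified parts and then reading off the next part as a maximum.

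First I would record two easy structural facts. Since $|\pre_2(\lambda)| = \binom{\ell(\lambda)}{2}$ and likewise for $\mu$, the hypothesis $\pre_2(\lambda) = \pre_2(\mu)$ forces $\binom{\ell(\lambda)}{2} = \binom{\ell(\mu)}{2}$, and as $t \mapsto \binom{t}{2}$ is injective on the positive integers this gives $\ell(\lambda) = \ell(\mu) =: \ell$. (If $\ell = 1$ then $\lambda = (n) = \mu$ and we are done, so assume $\ell \geq 2$.) Second, for a weakly decreasing sequence of positive reals, $\lambda_i\lambda_j \leq \lambda_a\lambda_b$ whenever $a \leq i$ and $b \leq j$; in particular, among all products $\lambda_i\lambda_j$ with $i < j$ and $j \geq k+1$, the largest is $\lambda_1\lambda_{k+1}$ (it dominates every such product and is itself attained by the pair $(1, k+1)$).

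The core is an induction on $k$ proving $\lambda_k = \mu_k$ for $1 \leq k \leq \ell$. The base case $k = 1$ is the hypothesis. For the inductive step, suppose $\lambda_i = \mu_i$ for all $i \leq k$, with $k < \ell$. Then the sub-multisets $\{\lambda_i\lambda_j : 1 \leq i < j \leq k\}$ and $\{\mu_i\mu_j : 1 \leq i < j \leq k\}$ coincide, and each is a sub-multiset of $\pre_2(\lambda) = \pre_2(\mu)$; deleting them leaves
$$\{\lambda_i\lambda_j : 1 \leq i < j \leq \ell,\ j \geq k+1\} = \{\mu_i\mu_j : 1 \leq i < j \leq \ell,\ j \geq k+1\}.$$
By the second structural fact the maximum of the left side is $\lambda_1\lambda_{k+1}$ and that of the right side is $\mu_1\mu_{k+1}$, so $\lambda_1\lambda_{k+1} = \mu_1\mu_{k+1}$; since $\lambda_1 = \mu_1 > 0$, this yields $\lambda_{k+1} = \mu_{k+1}$, closing the induction and hence proving $\lambda = \mu$.

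I expect the only points requiring care to be the legitimacy of the multiset subtraction (checking that the deleted collection really is a sub-multiset of $\pre_2$) and the verification of the maximal-product claim, which is the one genuinely load-bearing observation; notably the argument needs neither the identity $\sum\lambda_i = \sum\mu_i$ nor $\sum\lambda_i^2 = \sum\mu_i^2$. The substantive difficulty of the whole problem is thereby deferred to Theorem~\ref{thm: main theorem}, where one must first produce the equality $\lambda_1 = \mu_1$ that this lemma takes as given.
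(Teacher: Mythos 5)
Your proposal is correct and is essentially the paper's own argument: both determine the common length from $\binom{\ell}{2}$ and then induct, recovering $\lambda_1\lambda_{k+1} = \mu_{k+1}\mu_1$ as the maximum of $\pre_2(\lambda)$ after deleting the multiset of products of already-matched parts, and dividing by $\lambda_1 = \mu_1$. You simply spell out the justification of the maximality claim and the multiset subtraction in more detail than the paper does.
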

\begin{proof}
    Observe that $\ell(\lambda) = \ell(\mu)$ because $\ell(\pre_2(\lambda)) = \binom{\ell(\lambda)}{2} = \binom{\ell(\mu)}{2} = \ell(\pre_2(\mu))$. We want to show $\lambda_i = \mu_i$ for all $1 \leq i \leq \ell(\lambda) = \ell(\mu)$, and we proceed by strong induction on $i$. For the base case, we know $\lambda_1 = \mu_1$ by assumption.

    For the inductive step, observe that 
    \begin{align*}
        \lambda_1 \lambda_k &= \max\left(\pre_2(\lambda) \setminus \{\!\!\{\lambda_i\lambda_j: 1 \leq i < j \leq k-1\}\!\!\}\right) \\
        &= \max\left(\pre_2(\mu) \setminus \{\!\!\{\mu_i\mu_j: 1 \leq i < j \leq k-1\}\!\!\}\right) = \mu_1\mu_k,
    \end{align*}
    where $\{\!\!\{\}\!\!\}$ denotes multiset. Since $\lambda_1 = \mu_1$, we conclude that $\lambda_k = \mu_k$.  
\end{proof}

With Lemma \ref{lem: largest term} established, we have the necessary tool to prove our main result.

\begin{proof}[Proof of Theorem \ref{thm: main theorem}]
    It suffices to show the identity $\sum_{1\leq i \leq \ell} \lambda_i^{2^m} = \sum_{1\leq i \leq \ell} \mu_i^{2^m}$ for all positive integers $m$ because then
    $$\lambda_1 = \lim_{m \to \infty} \left(\sum_{1\leq i \leq \ell} \lambda_i^{2^m}\right)^{1/2^m} = \lim_{m \to \infty} \left(\sum_{1\leq i \leq \ell} \mu_i^{2^m}\right)^{1/2^m} = \mu_1,$$ 
    and we can apply Lemma \ref{lem: largest term}.
    
    We prove the identity by strong induction on $m$. For the base case, $\sum_{i} \lambda_i = \sum_{i}\mu_i = n$ by assumption. For the inductive step, observe that
    \begin{align*}
        \sum_{1\leq i \leq \ell(\lambda)} \lambda_i^{2^m} &= \left(\sum_{1\leq i \leq \ell(\lambda)} \lambda_i^{2^{m-1}} \right)^2 - 2\sum_{\lambda_i\lambda_j \in \pre_2(\lambda)}(\lambda_i\lambda_j)^{2^{m-1}} \\
        &= \left(\sum_{1\leq i \leq \ell(\mu)} \mu_i^{2^{m-1}} \right)^2 - 2\sum_{\mu_i\mu_j \in \pre_2(\mu)}(\mu_i\mu_j)^{2^{m-1}} = \sum_{1\leq i \leq \ell(\mu)} \mu_i^{2^m},
    \end{align*}
    as desired.
\end{proof}

We remark that a classical result of Selfridge and Strauss \cite{SS58} states that the set $X = \{x_1, \dots, x_{\ell}\} \subset \mathbb{C}$ is uniquely determined by the set of pairwise sums of elements of $X$ if and only if $\ell$ is not a power of $2$. An immediate consequence of this theorem is that $\pre_2$ is injective on partitions with lengths that are not powers of $2$, regardless of the sizes of the partitions. The strength of our result is that we can also handle partitions with lengths that are powers of $2$ if we fix their sizes. 

\section*{Acknowledgments}

This research was conducted at the University of Minnesota Duluth REU with support from Jane Street Capital, NSF Grant $2409861$, and donations from Ray Sidney and Eric Wepsic. I would like to thank Mitchell Lee, Noah Kravitz, and Katherine Tung for their invaluable guidance throughout the research process and helpful suggestions during the editing process. I am very grateful to Joe Gallian and Colin Defant for organizing the Duluth REU and for the invitation to participate.

\bibliography{biblio}

\begin{thebibliography}{FMGGGG15}
\bibitem[BBM24]{BBM24} Ballantine, C., Beck, G. and Merca, M. Partitions and elementary symmetric polynomials: an experimental approach. Ramanujan J. 66, 34 (2025). \bburl{https://doi.org/10.1007/s11139-024-01001-6}.

\bibitem[BBMS24+]{BBMS24+} Ballantine, C., Beck, G., Merca, M. and Sagan, B. Elementary symmetric partitions. \bburl{https://arxiv.org/abs/2409.11268}.

\bibitem[SS58]{SS58} Selfridge, J. L. \& Straus, E. G. On the determination of numbers by their sums of a fixed order. Pacific J. Math. 8 (4) 847 - 856, 1958. 
\end{thebibliography}

\end{document}